\documentclass[10pt,twosided,b4paper]{amsart}
\usepackage{epsfig}

\title[Decomposition of local representations]{Irreducible decomposition for local representations of quantum Teichm\"uller space}
\author{J\'{e}r\'{e}my Toulisse}
\address{
Department of Mathematics \\
University of Southern Califonia \\
3620 S. Vermont Avenue, KAP 104 \\
Los Angeles, CA 90089-2532}
\email{toulisse@usc.edu}
\date{\today}

\newcommand{\spann}{\text{span}}

\newcommand{\Id}{\text{Id}}
\newcommand{\End}{\text{End}}
\newcommand{\U}{\mathcal{U}(N)}
\newcommand{\p}{\textbf{p}}
\newcommand{\T}[1]{\mathcal{T}_q(#1)}
\newcommand{\TT}[1]{\hat{\mathcal{T}}_q(#1)}
\newcommand{\Sk}[1]{\mathcal{S}_A(#1)}

\newcommand{\Z}[1]{\mathcal{Z}_\omega(#1)}
\newcommand{\W}{\mathcal{W}(\tau_\lambda,\mathbb{Z})}

\theoremstyle{definition}
\newtheorem{Def}{Definition}[section]
\theoremstyle{plain}
\newtheorem{theo}[Def]{Theorem}
\newtheorem*{maintheo}{Main Theorem}

\newtheorem{prop}[Def]{Proposition}

\newtheorem{lemma}[Def]{Lemma}
\theoremstyle{remark}
\newtheorem{rem}[Def]{Remark}
\newtheorem{ex}[Def]{Example}

\begin{document}

\maketitle

\begin{abstract}
We give an irreducible decomposition of the so-called local representations \cite{baibonahon} of the quantum Teichm\"uller space $\T\Sigma$ where $\Sigma$ is a punctured surface of genus $g>0$ and $q$ is a $N$-th root of unity with $N$ odd. As an application, we construct  a family of representations of the Kauffman bracket skein algebra of the closed surface $\overline\Sigma$.
\end{abstract}

\tableofcontents

\section{Introduction}

Let $\Sigma$ be the surface obtained by removing $s>0$ points $v_1,...,v_s$ from the closed oriented surface $\overline{\Sigma}$ of genus $g>0$. Denote by $\mathcal{T}(\Sigma)$ the Teichm\"uller space of $\Sigma$, that is the moduli space of complete hyperbolic metrics on $\Sigma$. Given $\lambda$ an ideal triangulation of $\Sigma$ (that is a triangulation of the closed surface $\overline\Sigma$ whose vertices are exactly the $v_i$), Thurston \cite{thurston} constructed a parametrization of $\mathcal{T}(\Sigma)$ by associating a strictly positive real number to each edge $\lambda_i$ of the ideal triangulation, $i\in\{1,...,n\}$ (where $n=6g-6+3s$ is the number of edges of $\lambda$). These coordinates are called \textit{shear coordinates} associated to $\lambda$. In this coordinates system, the coefficients of the Weil-Petersson form on $\mathcal{T}(\Sigma)$ depend only on the combinatoric of $\lambda$ and are easy to compute.

For a parameter $q\in\mathbb{C}^*$, Chekhov and Fock \cite{fock1999quantum} defined the so-called \textit{quantum Teichm\"uller space} $\T\Sigma$ of $\Sigma$, which is a deformation quantization of the Poisson algebra of a certain class of functions over $\mathcal{T}(\Sigma)$ (see also Kashaev \cite{kashaev1998quantization} for a slightly different version). This algebraic object is obtained by gluing together a collection of non-commutative algebras $\T\lambda$, called \textit{Chekhov-Fock algebras}, canonically associated to each ideal triangulation of $\Sigma$. A representation of $\T\Sigma$ is then a family of representations $\{\rho_\lambda : \T\lambda \to \End(V) \}_{\lambda\in \Lambda(\Sigma)}$, where $\Lambda(\Sigma)$ is the space of all ideal triangulations of $\Sigma$, and $\rho_\lambda$ and $\rho_{\lambda'}$ satisfy compatibility conditions whenever $\lambda\neq \lambda'$. For $\lambda\in\Lambda(\Sigma)$, the representation $\rho_\lambda$ is an avatar of the representation of $\T\Sigma$ and carries almost all the information.

When $q$ is a primitive $N$-th root of unity, $\T\lambda$ admits finite-dimensional representations. In this paper, we will consider $N$ odd. The irreducible representations of $\T\lambda$ have been studied in \cite{bonahonliu}. In particular, they show that an irreducible representation of $\T\lambda$ is classified (up to isomorphism) by a weight $x_i\in\mathbb{C}^*$ assigned to each edge $\lambda_i$, a choice of $N$-th root $p_j=\left(x_1^{k_{j_1}}...x_n^{k_{j_n}}\right)^{1/N}$ associated to each puncture $v_j$ (where $k_{j_i}$ is the number of times a small simple loop around $v_j$ intersects $\lambda_i$) and a $N$-th root $c=(x_1...x_n)^{1/N}$. Such a representation has dimension $N^{3g-3+s}$.

In \cite{baibonahon}, the authors introduced another type of representations of $\T\lambda$, called \textit{local representations}, which are well behaved under cut and paste. A local representation of $\T\lambda$ is defined by a an embedding into the tensorial product of \textit{triangle algebras} (see definitions below). Isomorphism classes of local representations of $\T\lambda$ are classified by a weight $x_i\in\mathbb{C}^*$ associated to each edge $\lambda_i$ and a choice of $N$-th root $c=(x_1...x_n)^{1/N}$. Such a representation has dimension $N^{4g-4+2s}$.

It follows that a local representation of $\T\lambda$ is not irreducible. In this paper, we address the question of the decomposition of a local representation into its irreducible components. In particular, we prove the following result:

\begin{maintheo}
Let $\lambda$ be an ideal triangulation of $\Sigma$ and $\rho$ be a local representation of $\T\lambda$ classified by weight $x_j \in \mathbb{C}^*$ associated to each edge $\lambda_j$ and a choice of $N$-th root $c=(x_1...x_n)^{1/N}$. We have the following decomposition:
$$\rho=\bigoplus_{i\in \mathcal{I}}\rho^{(i)}.$$
Here, $\rho^{(i)}$ is an irreducible representation classified by the same $x_j$, a $N$-th root $p_j^{(i)}=(x_1^{k_{j_1}}...x_n^{k_{j_n}})^{1/N}$ associated to each puncture, and the same $c$. Moreover, given a choice of $N$-th root $p_j=(x_1^{k_{j_1}}...x_n^{k_{j_n}})^{1/N}$ for each puncture, there exists exactly $N^g$ elements $i\in\mathcal{I}$ with $p^{(i)}_j=p_j$ for all $j\in\{1,...,s\}$.
\end{maintheo}

It is proved by Bai in \cite{bai} that the intertwining operators associated to two isomorphic representations $\rho: \T\lambda \longrightarrow \End(V)$ and $\rho' : \T{\lambda'} \longrightarrow \End(V')$ correspond, in the case of the square, to the $6j$-symbols defined by Kashaev in \cite{kashaevlinkinvariant}. These $6j$-symbols relate hyperbolic geometry and quantum invariants and gave birth to the famous Volume Conjecture (see \cite{murakami} for an overview).

In particular, Baseilhac and Benedetti \cite{QHG} used these $6j$-symbols to construct a $(2+1)$ Topological Quantum Field Theory (TQFT) on manifolds with $\text{PSL}(2,\mathbb{C})$-character. Our result thus provides a decomposition of the vector spaces arising in the TQFT. 

\medskip

As an application, we adapt the construction of \cite{bonahonwongIII} to local representations of the balanced Chekhov-Fock algebra and obtain a family of representations of the Kauffman bracket skein algebra $\Sk{\overline\Sigma}$ of the closed surface $\overline\Sigma$. The vector space associated to this family of representations is canonically associated to an ideal triangulation $\lambda$. In particular, it makes the computations very explicit. It also behaves well under cut and paste.

\medskip

In the first section, we recall the definition of the Chekhov-Fock algebra, the quantum Teichm\"uller space, the triangle algebra and the local representations.
In Section \ref{mainsection}, we prove the Main Theorem. Finally, in Section \ref{consequences}, we explain the connections between quantum Teichm\"uller theory, skein theory and construct a new family of representations of $\Sk{\overline\Sigma}$.

\textbf{Acknowledgment :} \thanks{I would like to thank Francis Bonahon for valuable discussions on the subject. This research was partially supported by the grant DMS-1406559 from the U.S. National Science Foundation. The author gratefully acknowledges support from the NSF grants DMS-1107452, 1107263 and 1107367 ``RNMS: GEometric structures And Representation varieties'' (the GEAR Network).}

\section{Chekhov-Fock algebra and representations of $\T\Sigma$}

In this section, we define the Chekhov-Fock algebra $\T\lambda$ associated to an ideal triangulation $\lambda$, describe its representations and introduce the quantum Teichm\"uller space. Most results come from \cite{bonahonliu} and \cite{baibonahon}.

In all this paper, given an integer $n\in\mathbb{N}$, set $\mathbb{Z}_n:=\mathbb{Z}/n\mathbb{Z}$ and denote by $\mathcal{U}(N)$ the group of $N$-th root of unity.

\subsection{The Chekhov-Fock algebra}\label{chekhovfockalgebra} In this subsection, $q$ is a formal parameter and $\Sigma$ is allowed to have boundary components with punctures on the boundary (and every boundary component has at least one puncture). 

Let $\lambda$ be an ideal triangulation of $\Sigma$. We denote by $\lambda_1,...,\lambda_n$ the edges of $\lambda$. The \textit{Fock's matrix associated to $\lambda$} is the skew-symmetric matrix $\eta_\lambda=(\sigma_{ij})_{i,j}\in \mathcal{M}_n(\mathbb{Z})$ defined by
$$\sigma_{ij}=a_{ij}-a_{ji}$$ 
where $a_{ij}$ is the number of angular sector delimited by $\lambda_i$ and $\lambda_j$ in the faces of $\lambda$ with $\lambda_i$ coming before $\lambda_j$ counterclockwise.

\begin{Def}
The \textit{Chekhov-Fock algebra of $\lambda$} is the algebra $\T\lambda$ freely generated by the elements $X_i^{\pm 1},~i\in\{1,...,n\}$ subject to the relations
$$X_iX_j= q^{2\sigma_{ij}}X_jX_i.$$
\end{Def}

The following example is of main importance.

\begin{ex}\label{trianglealgebra}
Let $T$ be a disk with three punctures $v_1,v_2,v_3$ on the boundary. The boundary arcs between the punctures provides a natural triangulation $\lambda$ of $T$ (see Figure \ref{T}).
\begin{figure}[!h] 
\begin{center}
\includegraphics[height=3.5cm]{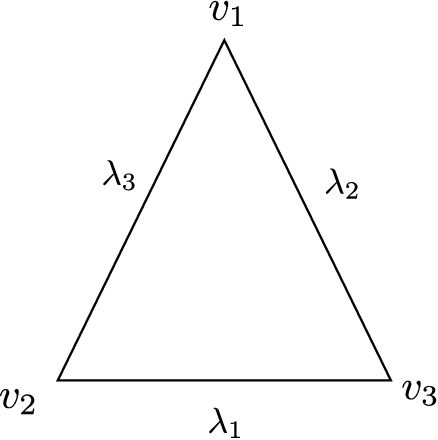}
\end{center}
\caption{The triangle $T$.} 
\label{T}
\end{figure}

The \textit{triangle algebra} is $\mathcal{T}:= \T\lambda$. It is generated by $X^{\pm 1}_i,~i=1,2,3$ subject to the relations
$$X_iX_{i+1}=q^2 X_{i+1}X_i,~i\in\mathbb{Z}_3.$$ 
\end{ex}
The algebraic structure of the Chekhov-Fock algebra is fairly simple. In particular, it is a quantum torus \cite{goodearl}.

Given a monomial $X\in\T\lambda$ composed of a product of $X_i^{k_i}$ for a multi-index $\textbf{k}=(k_1,...,k_n)\in \mathbb{Z}^n$, we define the \textit{Weyl ordering} of $X$ to be the monomial $[X]$ defined by
$$[X]:=q^{-\sum_{i<j}\sigma_{ij}}X_1^{k_1}...X_n^{k_n}.$$
The advantage of the Weyl ordering is its independence with respect to the order of the terms. In particular, for any permutation $\sigma: \{1,...n\} \to \{1,...,n\}$, we have
$$\left[X_1^{k_1}...X_n^{k_n}\right]=\left[X_{\sigma(1)}^{k_{\sigma(1)}}...X_{\sigma(n)}^{k_{\sigma(n)}}\right].$$
For $\textbf{k}=(k_1,...,k_n)\in\mathbb{Z}^n$ a multi-index, we define $X_\textbf{k}:=\left[ X_1^{k_1}...X_n^{k_n}\right]\in\T\lambda$.

\subsection{Finite dimensional representations of $\T\lambda$}\label{representationsChekhovFock} When the parameter $q$ is a root of unity, the structure of the Chekhov-Fock algebra is drastically different. In particular, $\T\lambda$ admits finite dimensional representations that we describe here.

In this subsection, $q\in\mathbb{C}^*$ is a primitive $N$-th root of unity with $N$ odd, $\Sigma$ has no boundary component and $\lambda$ is an ideal triangulation of $\Sigma$ with edges labeled $\lambda_1,...,\lambda_n$.

\begin{Def}\label{defpunctureinvariant}
For each puncture $v_j$, the \textit{puncture invariant} $P_j\in \T\lambda$ is the monomial associated to the multi-index $\textbf{k}_j=(k_{j_1},...,k_{j_n})\in\mathbb{N}^n$ where $k_{j_i}$ is the minimum number of intersections between the edge $\lambda_i$ and a closed simple loop around $v_j$.
\end{Def}

The puncture invariants are of main importance in the representation theory of the Chekhov-Fock algebra. In particular, we have the following:

\begin{prop}[\cite{bonahonliu}, Proposition 15]\label{center} The center of $\T\lambda$ is generated by:
\begin{itemize}
\item $X_i^N$ for each $i\in\{1,...,n\}$.
\item The \textit{puncture invariant} $P_j$ associated to each puncture $v_j\in\{v_1,...,v_s\}$.
\item The element $H:=\left[ X_1...X_n\right]$.
\end{itemize}
\end{prop}

Note that $[P_1...P_s]=[H^2]$.

A \textit{representation of $\T\lambda$} is a morphism $\rho: \T\lambda \longrightarrow \End(V)$ where $V$ is a vector space. Such a representation is \textit{finite dimensional} when $V$ is finite dimensional and $\rho$ is called \textit{irreducible} when the is no proper linear subspace $W\subset V$ preserved by $\rho\big(\T\lambda\big)$. Two representations $\rho: \T\lambda \longrightarrow \End(V)$ and $\rho': \T\lambda \longrightarrow \End(V')$ are \textit{isomorphic} if there exists a linear isomorphism $L: V \longrightarrow V'$ such that
$$\rho'(X) = L\circ \rho(X)\circ L^{-1},~\forall X\in\T\lambda.$$

\begin{theo}[\cite{bonahonliu}, Theorem 20-21]
Up to isomorphism, any irreducible representation 
$$\rho: \T\lambda\longrightarrow \End(V)$$
is determined by its restriction to the center of $\T\lambda$ and is classified by a non-zero complex number $x_i$ associated to each edges $\lambda_i$, for each puncture $v_j$, a choice of a $N$-th root $p_j=\left(x_1^{k_{j_1}}...x_n^{k_{j_n}}\right)^{1/N}$ (where $k_{j_k}\in\{1,2\}$ are defined in Definition \ref{defpunctureinvariant})  and a choice of a square root $c=(p_0...p_s)^{1/2}$.

Such a representation is characterized by:
\begin{itemize}
\item $\rho\left(X_i^N\right)=x_i \Id_V,$
\item $\rho(P_j)=p_j \Id_V,$
\item $\rho(H)=c \Id_V.$
\end{itemize}
Moreover, such a representation has dimension $N^{3g-3+s}$.
\end{theo}

Let us come back to Example \ref{trianglealgebra}. Remind that the triangle algebra $\mathcal{T}$ is the algebra generated by $X_i^{\pm 1},~i\in\mathbb{Z}_3$ with relations $X_iX_{i+1}=q^2X_{i+1}X_i$.

The center of $\mathcal{T}$ is generated by $X_i^N$ and $H=q^{-1}X_1X_2X_3$. One easily checks that irreducible representations of $\mathcal{T}$ have dimension $N$ and are classified (up to isomorphism) by a choice of weight $x_i\in\mathbb{C}^*$ associated to each edge $\lambda_i$ and a central charge, that is a choice of a $N$-th root $c=(x_1x_2x_3)^{1/N}$ (see \cite[Lemma 2]{baibonahon} for more details).

More precisely, let $V$ the complex vector space generated by $\{e_1,...,e_N\}$ and $\rho$ an irreducible representation of $\mathcal{T}$ classified by $x_1,x_2,x_3\in\mathbb{C}^*$ and $c=(x_1x_2x_3)^{1/N}$. Up to isomorphism, the action of $\mathcal{T}$ on $V$ defined by $\rho$ is given by:
$$\left\{
    \begin{array}{ll}
        \rho(X_1)e_i =  & x'_1q^{2i}e_i \\
        \rho(X_2)e_i = & x'_2e_{i+1} \\
        \rho(X_3)e_i = & x'_3 q^{1-2i}e_{i-1}
    \end{array}
\right.$$
where $x'_i$ is an $N$-th root of $x_i$ such that $x'_1x'_2x'_3=c$. Note that, up to isomorphism, $\rho$ is independent of the choice of the $N$-th root $x'_i$ with $x'_1x'_2x'_3=c$. 

The following lemma will be useful in the next section. Remind that $\U$ is the group of $N$-th root of unity.

\begin{lemma}\label{vp}
Let $\rho: \mathcal{T} \longrightarrow \End(V)$ be the representation of the triangle algebra classified by $x_1=x_2=x_3=1$ and $c\in \U$. Ror each $i\in \mathbb{Z}_3$ and $N$-th root $\zeta\in \U$, the eigenspace of $\rho(X_i)$ of eigenvalue $\zeta$ is one-dimensional.
\end{lemma}

\begin{proof}
We use the explicit form of the representation $\rho$ in $V=\spann\{e_1,...,e_N\}$ described above. Set $x_1'=x'_2=1,~x'_3=c$ and $\zeta=q^{2k}$. 

For $i=1$, one checks that the eigenspace of $\rho(X_1)$ associated to $h$ is generated by $e_k$.

For $i=2$, the vector $\alpha_k:=\sum_{i\in\mathbb{Z}_N} q^{-2ki}e_i$ satisfies $\rho(X_2)\alpha_k=q^{2k}\alpha_k$ and $\{\alpha_1,...,\alpha_k\}$ form a basis of $V$. So the eigenspace of $\rho(X_2)$ associated to the eigenvalue $\zeta$ is generated by $\alpha_k$.

For $i=3$, we use the fact that $\rho(q^{-1}X_1X_2X_3)=c\Id_V$, where $c\in\mathcal{U}(N)$.
\end{proof}

An ideal triangulation of $\Sigma$ is composed by $m$ faces $T_1,...,T_m$. Each face $T_j$ determines a triangle algebra $\mathcal{T}_j$ whose generators are associated to the three edges of $T_j$. It provides a canonical embedding $\iota$ of $\T\lambda$ into $\mathcal{T}_1\otimes...\otimes\mathcal{T}_m$ defined on the generators as follow:
\begin{itemize}
\item $\iota(X_i)= X_{j_i}\otimes X_{k_i}$ if $\lambda_i$ belongs to two distinct triangles $T_j$ and $T_k$ and $X_{j_i}\in\mathcal{T}_j$, $X_{k_i}\in\mathcal{T}_k$ are the generators associated to the edge $\lambda_i\in T_j$ and $\lambda_i\in T_k$ respectively.
\item $\iota(X_i)=[X_{j_{i_1}}X_{j_{i_2}}]$ if $\lambda_i$ corresponds to two sides of the same face $T_j$ and $X_{j_{i_1}},X_{i_{j_2}}\in\mathcal{T}_j$ are the associated generators.
\end{itemize}

\begin{Def}
A \textit{local representation} of $\T\lambda$ is a representation which factorizes as $(\rho_1\otimes...\otimes\rho_m)\circ \iota$ where $\rho_i : \mathcal{T}_i \to V_i$ is an irreducible representation of the triangle algebra $\mathcal{T}_i$ and $\iota: \T\lambda \longrightarrow \mathcal{T}_1\otimes...\otimes \mathcal{T}_m$ is defined as above.
\end{Def}
Note that in particular, a local representation has dimension $N^m$ where $m=4g-4+2s$ is the number of faces of the triangulation.

Local representations were first introduced in \cite{baibonahon}. In particular, they proved the following:

\begin{theo}[\cite{baibonahon}, Proposition 6]
Up to isomorphism, a local representation 
$$\rho: \T\lambda \longrightarrow \End(V)$$ 
is classified by a non-zero complex number $x_i$ associated to the edge $\lambda_i$ and a choice of a $N$-th root $c=(x_1...x_n)^{1/N}$. Such a representation satisfies:
\begin{itemize}
\item $\rho(X_i^N)=x_i \Id_V,$
\item $\rho(H)=c \Id_V.$
\end{itemize}
\end{theo}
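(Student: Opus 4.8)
The plan is to reduce the general case to a single well-chosen triangulation $\lambda_0$ and the trivial weights $x_1=\dots=x_n=1$, and then propagate by the known coordinate-change isomorphisms of $\mathcal{T}_q(\Sigma)$. For the first step I would pick a triangulation in which the combinatorics of the gluing is as transparent as possible — for instance one built from a standard polygon presentation of $\Sigma$, so that the central elements $X_i^N$, the puncture invariants $P_j$, and $H$ can all be written explicitly in terms of the triangle-algebra generators $X_{jk}$ under the embedding $\mathfrak{i}$. With $x_i=1$ for all $i$, the local representation $\rho=(\rho_1\otimes\cdots\otimes\rho_m)\circ\mathfrak{i}$ has each $\rho_a$ of the normalized form given after Lemma~\ref{vp} with trivial weights, so all the operators $\rho(X_i)$, $\rho(P_j)$, $\rho(H)$ have spectrum contained in $\mathcal{U}(N)$ (for $H$, a square root of a product of $N$-th roots of unity, so in $\mathcal{U}(2N)$).

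The heart of the argument is then a simultaneous eigenspace decomposition for the center. I would decompose $V=\bigotimes_a V_a$ (with $V_a=\operatorname{span}\{e_1^{(a)},\dots,e_N^{(a)}\}$) according to the joint eigenvalues of the commuting operators $\rho(P_j)$, $j=0,\dots,s$, and $\rho(H)$. On each joint eigenspace $V(p_\bullet,c)$ the representation $\rho$ restricts to a subrepresentation on which the center acts by scalars $p_j$, $c$, hence $V(p_\bullet,c)$ is a direct sum of irreducibles all classified by $(x_i=1, p_j, c)$ — and since an irreducible with fixed central character is unique up to isomorphism, the number of summands is $\dim V(p_\bullet,c)/N^{3g-3+s}$. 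So the whole theorem comes down to the single dimension count
\[
\dim V(p_\bullet,c)=N^g\cdot N^{3g-3+s}=N^{4g-3+s}
\]
for each admissible choice of $(p_0,\dots,p_s)$ with $[P_1\cdots P_s]=H^2$ realized and each of the two square roots $c$. Using Lemma~\ref{vp} this is a linear-algebra computation on $\bigotimes_a V_a$: the puncture invariants $P_j$ and $H$ become, under $\mathfrak{i}$ and the $\rho_a$, explicit monomials in the diagonal/shift operators $X_{a,k}$, so fixing their eigenvalues imposes a system of $\mathbb{Z}/N$-linear constraints on the "diagonal coordinates" of the $e^{(a)}$-basis, and I expect exactly a $g$-dimensional (over $\mathbb{Z}/N$) space of solutions, i.e. $N^g$ basis vectors, in each eigenspace; the appearance of $g$ rather than something depending on $s$ should come from the $s$ puncture constraints plus the $H$-constraint eating up $s+1$ of the $n=6g-6+3s$ edge-directions modulo the one relation $[P_1\cdots P_s]=H^2$, together with the $m=4g-4+2s$ "doubling" degrees of freedom from $V=\bigotimes_a V_a$ versus the $n$ edges. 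I would organize this count via the exact sequence relating the lattice $\mathbb{Z}^n$ of edge-multi-indices, the sublattice spanned by the $\mathbf{k}_j$ and $(1,\dots,1)$, and the kernel/cokernel of the map $\mathbb{Z}^{3m}\to\mathbb{Z}^n$ dual to $\mathfrak{i}$.

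For the second step — arbitrary weights and arbitrary triangulation — I would invoke that any two ideal triangulations are connected by a sequence of flips, that the quantum Teichmüller space comes with isomorphisms $\Phi_{\lambda\lambda'}\colon\mathcal{T}_q(\lambda')\to\mathcal{T}_q(\lambda)$ intertwining local representations with local representations and irreducibles with irreducibles (this is exactly the compatibility built into the definition recalled in the introduction, and is established in \cite{math/0407086} and \cite{0707.2151}), and that these isomorphisms match up the central elements $X_i^N$, $P_j$, $H$ on the two sides; hence the multiplicities computed for $(\lambda_0, x=1)$ transport verbatim. To pass from $x=1$ to general $x_i\in\mathbb{C}^*$ at fixed $\lambda_0$, I would use the rescaling automorphism $X_i\mapsto \mu_i X_i$ of the ambient algebra (absorbing chosen $N$-th roots of the $x_i$ into the $\widetilde{x}_i$ of the triangle representations): this carries the weight-$1$ local representation to the weight-$x$ one and is the identity on the discrete data $(p_\bullet, c)$ once normalized, so again multiplicities are preserved. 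The main obstacle I anticipate is purely the bookkeeping of Step 1: getting the monomials for $P_j$ and $H$ right under $\mathfrak{i}$ for the chosen $\lambda_0$ and then doing the $\mathbb{Z}/N$-rank computation cleanly enough that the answer visibly equals $N^g$ independently of $s$; everything after that is formal.
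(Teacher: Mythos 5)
You appear to be proving a different theorem than the one stated. The statement under review is the Bai--Bonahon--Liu classification of \emph{local} representations (cited from \cite{math/0407086}, Proposition~6): up to isomorphism, a local representation of $\mathcal{T}_q(\lambda)$ is determined by the edge weights $x_i$ and the charge $c$ with $c^N=x_1\cdots x_n$, meaning that two local representations with the same tuple $(x_1,\dots,x_n,c)$ are isomorphic, isomorphic ones have the same tuple, and every such tuple is realized. Your proposal, by contrast, is organized entirely around the dimension count $\dim V(p_\bullet,c)=N^g\cdot N^{3g-3+s}$ and the multiplicity $N^g$ of each irreducible constituent. That is Theorem~\ref{maintheo}, the main new result of this paper, not the cited classification result. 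The paper does not prove the Bai--Bonahon--Liu classification; it imports it.

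Even setting aside the mismatch, there are gaps. A direct proof of the classification must show that a tensor product $\rho_1\otimes\cdots\otimes\rho_m$ of irreducible triangle representations depends, up to isomorphism of its restriction to $\mathfrak{i}(\mathcal{T}_q(\lambda))$, only on the products $x_i$ along each edge and the global charge $c$, and not on the individual triangle data $\widetilde{x}_{jk}$. Concretely one has to exhibit intertwiners absorbing the rescalings $\widetilde{x}_{jk}\mapsto\zeta_{jk}\widetilde{x}_{jk}$ whenever the $\zeta_{jk}$ multiply to $1$ along each edge and overall, and then observe that any two choices of $\widetilde{x}$'s with the same $(x_i,c)$ differ by such a rescaling; your sketch never addresses this, which is the actual content of the theorem. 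Trying instead to deduce the classification from the decomposition is circular in this paper's logic: the proof of Theorem~\ref{maintheo} invokes Proposition~10 of \cite{math/0407086} (which sits downstream of the classification) for the global case, and uses the classification to identify which irreducible central characters can occur. Two smaller slips: with $N$ odd and $x_1\cdots x_n=1$ one has $c\in\mathcal{U}(N)$, not $\mathcal{U}(2N)$; and for a fixed local representation the charge $c$ is a fixed scalar, so there is no ``each of the two square roots $c$'' to range over --- the condition $p_0\cdots p_s=c^2$ with $c$ given constrains which tuples $(p_0,\dots,p_s)$ occur.
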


Local representations have certain advantages over irreducible representations. First of all, these representations behave very well under cut and paste, so one can used them to construct invariant of 3-manifolds (see \cite{QHG}). Also, the vector space associated to a local representation decomposes as a tensor product of vector spaces and each generator $X_i\in\T\lambda$ associated to an edge $\lambda_i$ only acts on the vector spaces associated to triangle adjacent to the edge $\lambda_i$ (that is why these representations are called local). 

\subsection{The quantum Teichm\"uller spaces and its representations}\label{repteich}

The quantum Teichm\"uller space is obtained by gluing together a family of (division algebra of) Chekhov-Fock algebras indexed by the set of ideal triangulations of $\Sigma$.

\medskip

\noindent\textbf{The simplex of ideal triangulations:} Let $\Lambda(\Sigma)$ be the set of ideal triangulations of $\Sigma$. We say that two triangulations $\lambda,\lambda'\in\Lambda(\Sigma)$ differ by a flip is $\lambda$ and $\lambda'$ coincide everywhere except in a square made of two adjacent triangles where they differ as in Figure \ref{flip}.

\begin{figure}[!h] 
\begin{center}
\includegraphics[height=1.5cm]{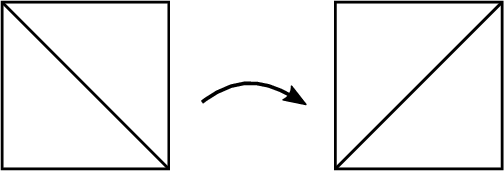}
\end{center}
\caption{Flip of triangulation.} 
\label{flip}
\end{figure}

The \textit{graph of ideal triangulations} is the graph whose set of vertices is $\Lambda(\Sigma)$ and two vertices $\lambda,\lambda'\in\Lambda(\Sigma)$ are connected by an edge if and only if $\lambda$ and $\lambda'$ differ by a flip.

The \textit{simplex of ideal triangulations} is obtained from the graph of ideal triangulations by gluing a 2-simplex on each cycle corresponding to the pentagon relation (see Figure \ref{pentagon}).

\begin{figure}[!h] 
\begin{center}
\includegraphics[height=6cm]{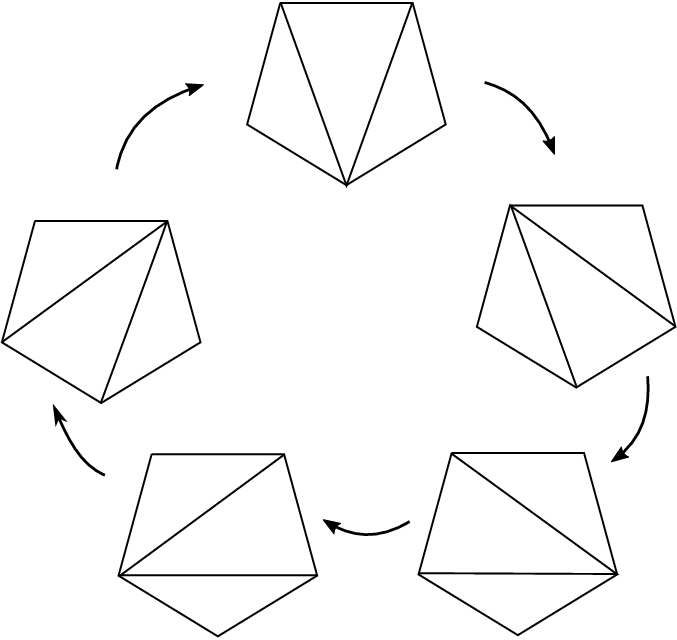}
\end{center}
\caption{Pentagon relation.} 
\label{pentagon}
\end{figure}

\begin{prop}[Penner \cite{penner}]
The simplex of ideal triangulations is connected and simply connected. Namely, any two different ideal triangulations are connected by a sequence of flips and two sequences between two ideal triangulations differ by a sequence of pentagon relations.
\end{prop}

\noindent\textbf{Coordinates change:} The Chekhov-Fock algebra $\T\lambda$ associated to an ideal triangulation $\lambda\in\Lambda(\Sigma)$ satisfies the Ore condition (see \cite{goodearl}). In particular, $\T\lambda$ has a well-defined division algebra $\TT\lambda$ consisting of rational fractions satisfying some non-commutativity relations. 

Let $\lambda,\lambda'\in\Lambda(\Sigma)$ be two ideal triangulations related by a flip. In \cite{fock1999quantum}, Chekhov and Fock constructed coordinates change isomorphisms
$$\Psi^q_{\lambda\lambda'}: \TT{\lambda'}\longrightarrow \TT\lambda.$$
These coordinates change satisfy the pentagon relation. In particular, using the result of Penner, they extend uniquely to coordinates change $\Psi^q_{\lambda\lambda'}: \TT{\lambda'}\longrightarrow \TT\lambda$ for any two different ideal triangulations $\lambda,\lambda'\in\Lambda(\Sigma)$.

It was proved in \cite{Liu} that these coordinates change are the unique ones satisfying some natural relations, as for instance $\Psi^q_{\lambda\lambda''}=\Psi^q_{\lambda\lambda'}\circ\Psi^q_{\lambda'\lambda''}$ for any $\lambda,\lambda'\lambda''\in\Lambda(\Sigma)$. Moreover, when $q=1$, these maps reduce to the classical coordinates change in Teichm\"uller theory (see \cite{Liu} for more details).

\begin{Def}
The \textit{quantum Teichm\"uller space of $\Sigma$} is defined by
$$\T\Sigma:=\bigsqcup_{\lambda\in\Lambda(\Sigma)} \hat{\mathcal{T}}_q(\lambda) / \sim,$$
where $x_\lambda\in \TT\lambda \sim x_{\lambda'}\in\TT{\lambda'}$ if and only if $x_\lambda = \Psi_{\lambda\lambda'}^q(x_{\lambda'})$.
\end{Def}
Note that, as each coordinates change $\Psi^q_{\lambda\lambda'}$ is an algebra isomorphism, $\T\Sigma$ inherits an algebra structure, and the $\TT\lambda$ can be thought as ``global coordinates'' on $\T\Sigma$.

\medskip

\noindent\textbf{Representations:} A natural definition for a finite dimensional representation of $\T\Sigma$ would be a family of finite dimensional representations 
$$\{\rho_\lambda: \hat{\mathcal{T}}_q(\lambda)\longrightarrow \End(V_\lambda)\}_{\lambda\in\Lambda(\Sigma)}$$ such that for each pair $\lambda,\lambda'\in\Lambda(\Sigma)$, the representations $\rho_{\lambda'}$ and $\rho_\lambda\circ \Psi^q_{\lambda,\lambda'}$ of $\TT{\lambda'}$ are isomorphic (as representations). 

However, as pointed out in \cite[Section 4.2]{baibonahon}, when $V_\lambda$ is finite dimensional, there is no morphism $\TT\lambda \longrightarrow \text{End}(V_\lambda)$. In fact, $\TT\lambda$ is infinite-dimensional as a vector space while $\End{V_\lambda}$ is finite dimensional and so, such a homomorphism $\rho_\lambda$ would have non-zero kernel. In particular, there would exists elements $x\in\hat{\mathcal{T}}_q(\lambda)$ such that $\rho_\lambda(x)=0$ and so, $\rho_\lambda(x^{-1})$ would make no sense.

It motivates the following definition:

\begin{Def}
A \textit{local representation} (respectively an \textit{irreducible representation}) of $\T\Sigma$ as a family of representation 
$$\{\rho_\lambda : \T\lambda\longrightarrow \End(V_\lambda)\}_{\lambda\in\Lambda(\Sigma)}$$
such that for each $\lambda,\lambda'\in\Lambda(\Sigma)$, $\rho_\lambda$ is a local representation (respectively an irreducible representation) of $\T\lambda$, and $\rho_{\lambda'}$ is isomorphic to $\rho_\lambda\circ \Psi^q_{\lambda\lambda'}$ whenever $\rho_\lambda\circ \Psi^q_{\lambda\lambda'}$ makes sense.

We say that $\rho_\lambda\circ \Psi^q_{\lambda\lambda'}$ makes sense, if for each Laurent polynomial $X'\in\T{\lambda'}$,
$$\Psi_{\lambda\lambda'}(X')=PQ^{-1}=Q'^{-1}P'\in\hat{\mathcal{T}}_q(\lambda),~\text{for } P,Q,P',Q' \in \T\lambda.$$
In that case, we define:
$$\rho_\lambda\circ\Psi_{\lambda\lambda'}(X'):=\rho_\lambda(P)\rho_\lambda(Q)^{-1}=\rho_\lambda(Q')^{-1}\rho_\lambda(P').$$
\end{Def}

We have the following:

\begin{prop}[\cite{baibonahon} Proposition 10]
Let $\lambda,\lambda'\in\Lambda(\Sigma)$ be two ideal triangulations of $\Sigma$. There exists a rational map
$$\varphi_{\lambda\lambda'}:\mathbb{C}^n\longrightarrow \mathbb{C}^n$$
such that a local representation $\rho': \T{\lambda'} \longrightarrow \End(V_{\lambda'})$ classified by $(x'_1,...,x'_n)$ and $c'=\big(x_1'...x_n'\big)^{1/N}$ is isomorphic to $\rho_\lambda\circ\Psi_{\lambda\lambda'}$ (whenever it makes sense) where $\rho_\lambda:\T\lambda \longrightarrow \End(V_\lambda)$ is a local representation classified by $(x_1,...,x_n)$ and $c=(x_1...x_n)^{1/N}$ if and only if  $c=c'$ and
$$(x'_1,...,x'_n)=\varphi_{\lambda\lambda'}(x_1,...,x_n).$$
\end{prop}

\begin{rem}
The analogue is also proved in \cite{bonahonliu} for irreducible representations. In particular, the rational maps $\varphi_{\lambda\lambda'}$ are the same.
\end{rem}

It turns out that the rational maps $\varphi_{\lambda\lambda'}$ correspond to the coordinates change of the shear-bend coordinates on the character variety $\chi\big(\Sigma,\text{SL}(2,\mathbb{C})\big).$

As a result, isomorphism classes of local (respectively irreducible) representations of $\T\Sigma$ are classified, up to finitely man choices, by the character variety $\chi\big(\Sigma,\text{SL}(2,\mathbb{C})\big)$ (see \cite{bonahonliu} for more details).
\section{Decomposition of local representations}\label{mainsection}

In this section, we prove Main Theorem. Let $\rho: \T\lambda \longrightarrow \End(V)$ be the local representation classified by a non-zero complex number $x_i$ associated to each edge and central charge $c$.  Given a puncture invariant $P_j=[X_1^{k_{j_1}}...X_n^{k_{j_n}}]$ (see Proposition \ref{center}) associated to the puncture $v_j$, the representation $\rho$ satisfies
$$\rho(P_j^N)=x_1^{k_{j_1}}...x_n^{k_{j_n}} \Id_V.$$
It follows that in $p_j$ is an eigenvalue of $P_j$, $p_j^N=x_1^{k_{j_1}}...x_n^{k_{j_n}}$.
\medskip

\noindent\textbf{Notations:} 
\begin{itemize}
\item Given $p_j\in \mathbb{C}^*$ so that $p_j^N=x_1^{k_{j_1}}...x_n^{k_{j_n}}$, we denote
$$V_{p_j}(P_j):=\{x\in V,~\rho(P_j)x=p_j x\}$$
the associated eigenspace.
\item Given $\p=(p_1,...,p_s)$ so that for each $j$ $p_j^N=x_1^{k_{j_1}}...x_n^{k_{j_n}}$, set
$$V_\p:=\{x\in V,~\rho(P_j)x=p_jx,~j=1,...,s\}=\bigcap_{j=1}^s V_{p_j}(P_j).$$
\end{itemize}

The proof of Main Theorem will follow the next proposition:

\begin{prop}\label{mainprop}
There exists an ideal triangulation $\lambda_0\in \Lambda(\Sigma)$ so that for each $\p$ as above, $V_\p$ has dimension $N^{4g-3+s}$.
\end{prop}

\begin{proof}
Note that the dimension of $V_\p$ does not depend on the numbers $x_i\in \mathbb{C}^*$ characterizing $\rho$. In this proof, we will consider all the $x_i$ equal to $1$, which implies that the eigenvalues of $\rho(P_i)$ are root of unity.

Consider the one punctured surface $\Sigma':=\Sigma\cup \{v_1,...,v_{s-1}\}$. As $g>0$, there exists an ideal triangulation $\lambda'$ of $\Sigma'$. Let $T$ be a triangle of the triangulation $\lambda'$ and consider the triangulation of $T\setminus \{v_1,...,v_{s-1}\}$ as in Figure \ref{triangulationofT}.

\begin{figure}[!h] 
\begin{center}
\includegraphics[height=8cm]{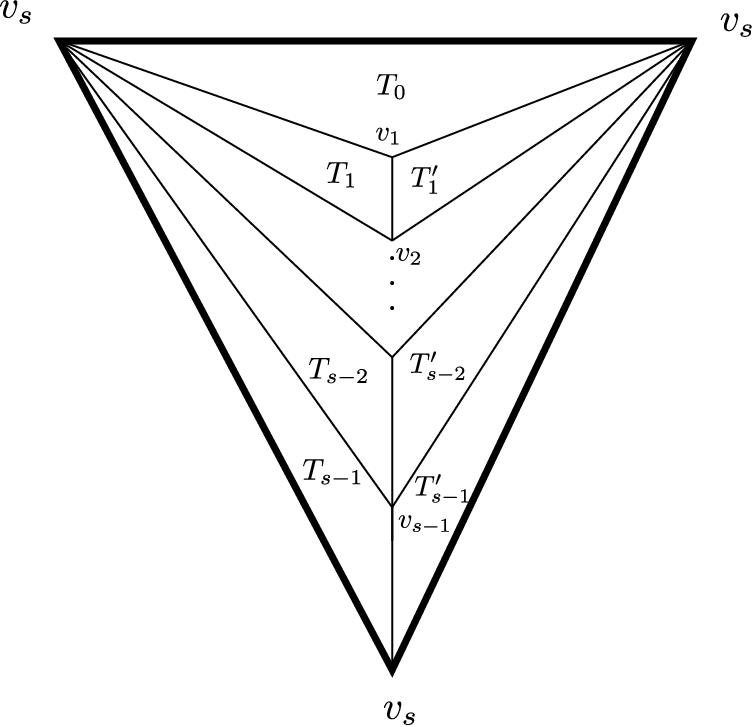}
\end{center}
\caption{Triangulation of $T\cup \{v_1,...,v_s\}$} 
\label{triangulationofT}
\end{figure}

The union of the triangulation $\lambda'$ and the one of $T$ gives an ideal triangulation $\lambda_0$ of $\Sigma$. 

Consider a local representation $\rho: \T{\lambda_0}\longrightarrow \End(V)$. The decomposition of the ideal triangulation $\lambda_0$ gives the following nice decomposition:
$$V=W\otimes W',$$
where $W'$ is the vector space corresponding to the triangles of the triangulation $\lambda'$ (except the triangle $T$), and $W$ corresponds to the triangles of $T$.

In particular, as the triangulation $\lambda'$ contains $4g-2$ triangles, $\dim(W')=N^{4g-3}$ (remember that we do not consider the vector space associated to $T$), and $\dim W= N^{2s-1}$.

The interest of the triangulation $\lambda_0$ is clear: the puncture invariant $P_i$ associated to the puncture $v_i\neq v_s$ acts as the identity on $W'$, so the eigenspaces of $\rho(P_i)$ has the form $E\otimes W'$ where $E\subset W$ is an eigenspace of the restriction of $\rho(P_i)$ on $W$. It motivates the following notations:

\medskip

\noindent\textbf{Notations:}
\begin{itemize}
\item The vector space $W$ decomposes as 
$$W=W^0\otimes...\otimes W^{s-1},$$
where $W^0$ is associated to $T_0$ and $W^j$ to $T_j$ and $T'_j$ for $j=1,...,s-1.$
\item Given a root of unity $p_k\in \U$, set
$$W^j_{p_k}(P_k):=\{x\in W^j,~\rho(P_k)x=p_kx\}.$$
\item For $\p=(p_1,...,p_{s-1})\in \U^{s-1}$, set
$$W^j_\p=\{x\in W^j,~\rho(P_k)x=p_kx,~k=1,...,s-1\}=\bigcap_k^{s-1} W^j_{p_k}(P_k).$$
\item Finally, set 
$$W_\p=\{x\in W,~\rho(P_k)x=p_kx,~k=1,...,s-1\}.$$
\end{itemize}

We have the following:

\begin{lemma}\label{mainlemma} Using the above notations, and given $\p\in \U^{s-1}$, we have the following:

\begin{itemize}
\item[1.] $
\dim W^0_{\p} = \left\{
    \begin{array}{ll}
        1 & \mbox{if } \p=(p_1,1,...,1)\\
        0 & \mbox{otherwise.}
    \end{array}
\right.
$
\item[2.]
For $j\in\{1,...,s-2\}$,
$$\dim W^j_{\p} = \left\{
    \begin{array}{ll}
        1 & \mbox{if } \p=(1,...,1,p_j,p_{j+1},1,...,1) \\
        0 & \mbox{otherwise.}
    \end{array}
\right.
$$
\item[3.] $
\dim W^{s-1}_{\textbf{h}} = \left\{
    \begin{array}{ll}
        N & \mbox{if } \p=(1,...,1,p_{s-1}) \\
        0 & \mbox{otherwise.}
    \end{array}
\right.
$
\end{itemize}
\end{lemma}
\begin{proof}
1. If $k\neq 1$, $v_k$ is not a vertex of $T_0$. It follows that $P_k$ acts on $W^0$ by the identity; so if $p_k\neq 1$, $W^0_{\p}=\{0\}$.

Now, if $p_k=1$ for all $k\neq 1$, then $W^0_{\p}$ is the eigenspace of the action on $W^0$ of the edge opposite to $v_1$. So by Lemma \ref{vp}, it is one dimensional.

\medskip

2. Fix $j\in\{1,...,s-2\}$. For $k\notin\{j,j+1\}$, $v_k$ is neither a vertex of $T_j$ nor of $T'_j$. So $P_j$ acts on $W^j$ as the identity. Hence, if $p_k\neq 1$, then $W^j_{\p}=\{0\}$.

Take $p_k=1$ for all $k\notin \{j,j+1\}$ and denote by $X^{\pm 1},Y^{\pm 1},Z^{\pm 1}$ (respectively ${X'}^{\pm 1},{Y'}^{\pm 1},{Z'}^{\pm 1}$) the generators of the triangle algebras $\mathcal{T}_j$ (respectively $\mathcal{T}'_j$) associated to the triangles $T_j$ (respectively ${T'}_j$) as in Figure \ref{T_j}. Set also $W^j=V^j\otimes V'^j$ where $V^j$ (respectively $V'^j$) is associated to $T_j$ (respectively $T'_j$).

\begin{figure}[!h] 
\begin{center}
\includegraphics[height=7cm]{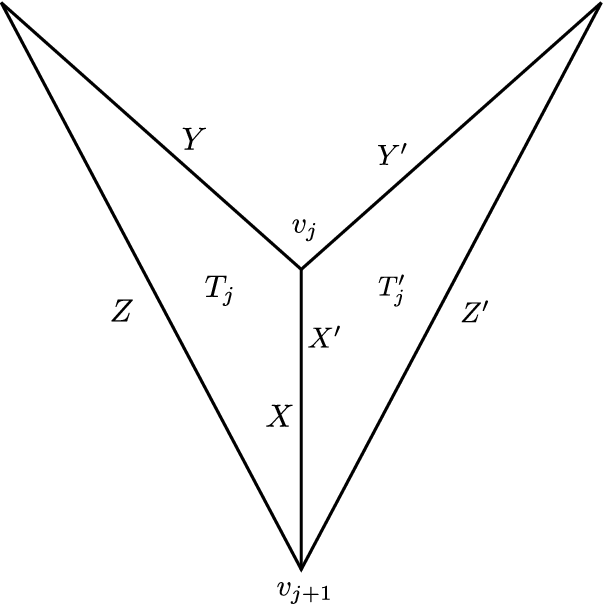}
\end{center}
\caption{The generators of $\mathcal{T}_j$ and $\mathcal{T}'_j$} 
\label{T_j}
\end{figure}

Denote by $c_j, c'_j\in\mathcal{U}(N)$ the central charges of the restriction of the representation to $\mathcal{T}_j$ and $\mathcal{T}'_j$ respectively. $\rho(P_j)$ acts on $V^j:=\spann\{e_0,...,e_{N-1}\}$ like $c_jZ^{-1}$ and on $V'^j=\spann\{e'_0,...,e'_{N-1}\}$ like $c'_jZ'^{-1}$. In the same way, $\rho(P_{j+1})$ acts on $V_j$ like $c_jY^{-1}$ and on $V'_j$ like $c'_jY'^{-1}$. 

Using the same action as in Section \ref{trianglealgebra} and writing $\left\{\begin{array}{l} c_j=q^p\\
c'_j=q^{p'}\end{array}\right.$, we get the following:

\begin{eqnarray*}
\rho(P_j) e_k & =q^{2k-1+p}e_{k+1} \\
\rho(P_j) e'_l & =q^{1-2l+p'}e_{l+1}
\end{eqnarray*}

It follows that the action of $P_j$ on $W^j$ is given by:
$$P_j\epsilon_{k,l}=q^{2(k-l)+p+p'}\epsilon_{k+1,l+1} \text{ where } \epsilon_{k,l}:=e_k\otimes e'_l\in W^j.$$
In the same way, one sees that the action of $P_{j+1}$ on $W^j$ is given by:
$$P_{j+1}\epsilon_{k,l}=q^{p+p'}\epsilon_{k-1,l-1}.$$
Now, for $m,n\in\mathbb{Z}_N$, set $\displaystyle{\alpha_{m,n}:=\sum_{k=0}^{N-1}q^{2km}\epsilon_{k,k+n}}$, an easy calculation shows that:
$$\left\{
    \begin{array}{lll}
        P_j\alpha_{m,n} & = & q^{-2(m+n)+p+p'}\alpha_{m,n} \\
        P_{j+1}\alpha_{m,n} & = & q^{2m+p+p'}\alpha_{m,n}.
    \end{array}
\right.$$
It follows that $\left\{ \alpha_{n,m},~n,m\in\mathbb{Z}_N\right\}$ is a base of $W^j$ and, for all $p_j,p_{j+1}\in \mathcal{U}(N)$, there exists a unique couple $(m,n)\in\mathbb{Z}_N^2$ with $p_j=q^{-2(m+n)+p+p'}$ and $p_{j+1}=q^{2m+p+p'}$.

So $\dim W^j_{\p}=1$ if and only if $p_k=1$ for all $k\neq j,j+1$.

\medskip

3. If $k\neq s-1$, $v_k$ is neither a vertex of $T_{s-1}$ nor $T'_{s-1}$, so if $p_k\neq 1$, $W^s_{\textbf{h}}=\{0\}$.

Suppose that $p_k=1$ for all $k\in\{1,...,s-2\}$, then 
$$W^{s-1}_{\textbf{h}}\supset\bigoplus_{p_ap_b=p_{s-1}} V^{s-1}_{p_a}(P_{s-1})\otimes V'^{s-1}_{p_b}(P_{s-1}),$$
where $V^s_{p_a}(P_{s-1})$ is the eigenspace associated to the eigenvalue $p_a$ of the action of $\rho(P_{s-1})$ on the vector space associated to the triangle $T_{s-1}$ and $V'^{s-1}_{p_b}(P_{s-1})$ is defined in an analogous way. 

The direct sum contains $N$ terms of dimension one, hence $\text{dim}(W^{s-1}_\p)\geq N$. But, we have 
$$\dim(W^{s-1})=N^2=\sum_{{\p}\in\mathcal{U}(N)^{s-1}}\dim(W^{s-1}_{\p})\geq N\times N.$$ 
So $W^{s-1}_{\p}$ has exactly dimension $N$.
\end{proof}

The proof of Proposition \ref{mainprop} follows from the following elementary remark:

\begin{rem}
For all $j\in \{0,...,s-1\}$, given $\p_j\in \U^{s-1}$ and a non-zero vector $x_j\in W^j_{\p_j}$, the vector $x_0\otimes...\otimes x_{s-1}$ is in $W_\p$ where $\p=\p_0\p_1...\p_{s-1}$.  
\end{rem}  

It follows that we have the following inclusion:

\begin{equation}\label{directsum}
W_\p\supset \bigoplus_{\p=\p_0...\p_{s-1}}W^0_{\p_0}\otimes...\otimes W^{s-1}_{\p_{s-1}}.
\end{equation}

Writing $\p_j=\left(p^{(j)}_0,...,p^{(j)}_{s-1}\right)$ and $\p=(p_1,...,p_s)$, one notes that from Lemma \ref{mainlemma}, the only non-zero terms in the direct sum of equation (\ref{directsum}) are the $\p_j$ satisfying
\begin{equation}\label{relations}
\left\{
    \begin{array}{ll}
        p^{(0)}_1p^{(1)}_1 =   p_1 \\
        p^{(1)}_2p_2^{(2)} =   p_2 \\
        \vdots \\
        p_{s-1}^{(s-2)}p_{s-1}^{(s-1)} =   p_{s-1}
    \end{array}
\right.
\end{equation}

There exists exactly $N^{s-1}$ different choices for the $\p_j$ satisfying relations (\ref{relations}). 

Moreover, for each choice of $\p_j$ satisfying (\ref{relations}), the vector space $W^0_{\p_0}\otimes...\otimes W^{s-1}_{\p_{s-1}}$ has dimension $N$. It follows that for each $\p\in \U^{s-1}$, $$\dim W_\p \geq N^s.$$

However, 
$$\dim W = N^{2s-1}=\sum_{\p\in \U^{s-1}} \dim W_\p,$$
so each $W_\p$ has exactly dimension $N^s$.

Finally, as the puncture invariants act as the identity on the vector space $W'$, the intersection of the eigenspaces of the $\rho(P_j)$ for all $j=1,...,s-1$ has the form $W_\p \otimes W'$ for some $\p\in\U^{s-1}$ and has dimension $N^{4g-3+s}$. But has the representation $\rho$ has fixed central charge $c$,
$$\rho([P_1P_2...P_s])=\rho([H^2])=c^2\Id_V.$$
It follows that the action of $\rho(P_s)$ on $V$ is easily expressed in function of the action of the $\rho(P_j)$ for $j=1,...,s-1$ and we get the result.
\end{proof}

Note that Proposition \ref{mainprop} implies the decomposition of Main Theorem for the triangulation $\lambda_0$. The dimension of the eigenspaces depend continuously on the $x_i$, in particular, we get the decomposition for all value of $x_i\in\mathbb{C}^*$.

In fact, consider the local representation
$$\rho: \T{\lambda_0} \longrightarrow \End(V)$$
classified by a non-zero complex number $x_i$ associated to each edge and central charge $c$. Let $\rho^{(i)}: \T{\lambda_0} \longrightarrow \End(V^{(i)})$ be an irreducible factor.

In particular,
$$\left\{\begin{array}{lllll}
\rho^{(i)}(X_i^N) & = & \rho (X_i^N)_{\vert V^{(i)}} & = & x_i\Id_{V^{(i)}} \\
\rho^{(i)}(H) & = & \rho (H)_{\vert V^{(i)}} & = & c\Id_{V^{(i)}},\end{array}\right.$$
so a necessary condition for $\rho^{(i)}$ to be an irreducible factor is to be classified by the same $x_i$ and same central charge $c$.

For each puncture $v_j$, denote $p_j^{(i)}$ the $N$-th root of $x_1^{k_{j_1}}...x_n^{k_{j_n}}$ so that
$$\rho^{(i)}(P_j)=p_j\Id_{V^{(i)}}.$$
It follows that $p_s^{(i)}=c^2 \left(p_1^{(i)}...p_{s-1}^{(i)}\right)^{-1}$ and $V^{(i)}\subset V_\p$ where $\p=\left(p_1^{(i)},...,p_{s-1}^{(i)}\right)$ with
$$V_\p=\left\{x\in V,~\rho(P_j)x=p_j^{(i)}x,~j=1,...,s-1\right\}.$$

Finally, as $\dim V_\p=N^{4g-3+s}$ and the dimension of an irreducible representation of $\T{\lambda_0}$ has dimension $N^{3g-3+s}$, $V_\p$ contains exactly $N^g$ irreducible factors classified by the same $x_i$, same central charge $c$ and $N$-the root $p_j^{(i)}$ associated to the puncture $v_j$.

\medskip

\noindent\textbf{Proof in the general case.}
Remind that, given another ideal triangulation $\lambda\in \Lambda(\Sigma)$, the ``transition maps'' $\varphi_{\lambda_0\lambda}: \mathbb{C}^n \longrightarrow \mathbb{C}^n$ defined in Section \ref{repteich} are rational, hence defined on a Zariski open set of $\mathbb{C}^n$.

Now, consider a local representation 
$$\rho: \T\lambda \longrightarrow \End(V_\lambda),$$
classified by a number $x_i\in \mathbb{C}^*$ associated to each edge and central charge $c$.

If there exists $(y_1,...,y_n)\in \mathbb{C}^n$ so that $\varphi_{\lambda_0\lambda}(y_1,...,y_n)=(x_1,...,x_n)$ (which is a generic condition), then it follows from Section \ref{repteich} that $\rho_\lambda$ is isomorphic to $\rho_{\lambda_0}: \T{\lambda_0} \longrightarrow \End(V_{\lambda_0})$. It means that there exists an isomorphism
$$L_{\lambda_0\lambda}: V_\lambda \longrightarrow V_{\lambda_0},$$
so that for each $X\in \T\lambda$ we have
$$\rho_{\lambda_0}(\Psi^q_{\lambda_0\lambda}(X))=L_{\lambda_0\lambda}\circ\rho_\lambda(X)\circ L_{\lambda_0\lambda}^{-1}.$$
Note that here $\Psi^q_{\lambda_0\lambda}: \hat{\mathcal{T}}_q(\lambda) \longrightarrow \hat{\mathcal{T}}_q(\lambda_0)$ are the coordinates change defined in Section \ref{repteich}.

As $\rho_{\lambda_0}$ is a local representation of $\T\lambda_0$, there exists an irreducible decomposition of $\rho_{\lambda_0}$ given by the decomposition $$\displaystyle{V_{\lambda_0}=\bigoplus_{i\in\mathcal{I}} V_{\lambda_0}^i}.$$
In particular, each $i\in \mathcal{I}$, $V^i_{\lambda_0}$ is stable by $\rho_{\lambda_0}$ and has dimension $N^{3g-3+s}$.

For each $i\in \mathcal{I}$, set $V_\lambda^i:= L^{(-1)}_{\lambda_0\lambda}(V^i_{\lambda_0})$. One easily gets that each $V_\lambda^i$ is stable by $\rho_\lambda(\T\lambda)$, has dimension $N^{3g-3+s}$. So we get a decomposition of $\rho_\lambda$ into irreducible factors given by the decomposition
$$V_\lambda= \bigoplus_{i\in\mathcal{I}} V_\lambda^i.$$

Finally, if $\rho_\lambda$ is classified by the parameters $(x_1,...,x_n)$ which are not in the image of $\varphi_{\lambda_0\lambda}$, one can deform continuously $(x_1,...,x_n)$ to get the previous decomposition and, as the decomposition does not depend of the parameters, get the result for $\rho_\lambda$.  

\section{Representations of the Skein algebra}\label{consequences}

In this section, we use Main Theorem to construct a nice family of representation of the Kauffman bracket skein algebra $\Sk{\overline{\Sigma}}$ of the closed surface $\overline\Sigma=\Sigma \cup \{v_1,...,v_s\}$. This is done by adapting the construction of Bonahon and Wong \cite{bonahonwongIII} to the case of local representations.

In Subsection \ref{balancedchekhovfock}, we describe the balanced Chekhov-Fock algebra $\Z\lambda$ associated to an ideal triagulation $\lambda$ of $\Sigma$ and characterize its irreducible representations. Then, in Subsection \ref{localrepresentationbalanced}, we introduce the local representations of $\Z\lambda$ and extend Main Theorem to decompose these local representations into irreducible factors. Finally, in Subsection \ref{skeinalgebra}, we use the previous decomposition to construct a family of representations of $\Sk{\overline\Sigma}$.

\subsection{Balanced Chekhov-Fock algebra}\label{balancedchekhovfock} Let $q$ be a primitive $N$-th root of unity with $N$ odd and let $\omega$ the unique fourth root of $q$ which is also a primitive $N$-th root of unity. Namely, if $q=e^{2i\pi\frac{k}{N}}$ with $N$ and $k$ co-prime, there is a unique $l\in\mathbb{Z}_4$ so that $k+lN\in4\mathbb{Z}$, then $\omega=e^{2i\pi\frac{k}{4N}}e^{i\frac{l\pi}{2}}$.

Let $\lambda$ be an ideal triangulation of $\Sigma$ whose edges are $(\lambda_1,...,\lambda_n)$. In order to avoid confusion, we will denote by $X_i$ the generators of $\T\lambda$ and by $Z_i$ the generators of $\mathcal{T}_\omega(\lambda)$.

A multi-index $\textbf{k}\in\mathbb{Z}^n$ is called \textit{$\lambda$-balanced} (or \textit{balanced}) if for each triangle of the triangulation whose edges are $j_1,j_2,j_3$ we have
$$k_{j_1}+k_{j_2}+k_{j_3}\in 2\mathbb{Z}.$$
A monomial $Z\in\mathcal{T}_\omega(\lambda)$ is \textit{balanced} if $Z$ is a scalar multiple of $Z_\textbf{k}$ where $\textbf{k}\in \mathbb{Z}^n$ is balanced (where $Z_\textbf{k}$ is defined as in Subsection \ref{chekhovfockalgebra}).

\begin{Def}
The \textit{balanced Chekhov-Fock algebra $\Z\lambda$} is the sub-algebra of $\mathcal{T}_\omega(\lambda)$ generated (as a vector space) by balanced monomials.
\end{Def}

In particular, the image of the map
$$\begin{array}{llll}
i: & \T\lambda & \longrightarrow & \mathcal{T}_\omega(\lambda) \\
& X_i & \longmapsto & Z_i^2
\end{array}$$
lies in $\Z\lambda$ so we will consider $\T\lambda$ as a sub-algebra of $\Z\lambda$.
 
The ideal triangulation $\lambda$ define canonically a train track $\tau_\lambda$ on $\Sigma$ which look like in Figure \ref{traintrack} on each triangle of the triangulation. Note that $\tau_\lambda$ has a switch on each edge of $\lambda$.

\begin{figure}[!h] 
\begin{center}
\includegraphics[height=4.5cm]{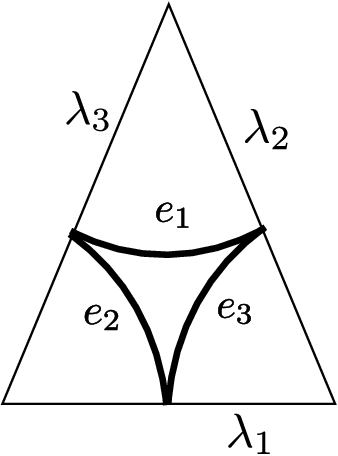}
\end{center}
\caption{Train track.} 
\label{traintrack}
\end{figure}

We denote by $\W$ the abelian group of integer weight systems on $\tau_\lambda$. Namely, an element $\alpha\in\W$ is a map that associates to each edge $e$ of $\tau_\lambda$ an integer $\alpha(e)$ in such a way that at each switch, the sum of the weights of the incoming edges equals the sum of the weights of the outgoing edges.

Given a weight system $\alpha\in\W$ and an edge $\lambda_i\in\lambda$, the sum of the weights of the edges incoming to $\lambda_i$ is an integer. It thus define a map 
$$\varphi:\W\longrightarrow \mathbb{Z}^n$$
whose image is exactly the set of balanced multi-index. Thus, given a integer weight system $\alpha\in\W$, we define $Z_\alpha\in\Z\lambda$ to be $Z_{\varphi(\alpha)}=[Z_1^{\alpha_1}...Z^{\alpha_n}_n]$ where $\varphi(\alpha)=(\alpha_1,...,\alpha_n)\in\mathbb{Z}^n$. In particular, one gets the following non-commutativity relations:
$$Z_\alpha Z_\beta = \omega^{4\Omega(\alpha,\beta)}Z_\beta Z_\alpha,$$
where $\Omega: \W \times \W \longrightarrow \mathbb{Z}$ is the Thurston intersection form (see \cite[Section 2]{bonahonwongII} for more details).

\begin{Def}
A \textit{twisted homomorphism} is a map $\zeta: \W \longrightarrow \mathbb{C}^*$ such that for every $\alpha,\beta\in\W$,
$$\zeta(\alpha+\beta)=(-1)^{\Omega(\alpha,\beta)}\zeta(\alpha)\zeta(\beta).$$
\end{Def}

Finally, note that each puncture $v_j$ defines a integer weight system $\eta_j\in\W$ as follow. Given an edge $e$ of $\tau_\lambda$, $\eta_j(e)\in\{0,1,2\}$ is the number of sides of $e$ that are adjacent to the same component of $\Sigma\setminus \tau_\lambda$ at $v_j$. Note that in particular, 
$$Z_{\eta_j}^2=i(P_j)$$
 where $P_j\in \T\lambda$ is the puncture invariant associated to $v_j$ and $i: \T\lambda \longrightarrow \Z\lambda$ is defined above.
 
Irreducible representations of $\Z\lambda$ were classified in \cite{bonahonwongIII}. They proved:

\begin{prop}[Bonahon-Wong \cite{bonahonwongII} Proposition 14] Up to isomorphism, an irreducible representation $\rho: \Z\lambda \longrightarrow \End(V)$ has dimension $N^{3g-3+s}$ and is classified by a twisted homomorphism $\zeta:\W\longrightarrow \mathbb{C}^*$ and a choice of $N$-th root $h_j=\zeta(\eta_j)^{1/N}$ for each puncture $v_j$. Such a representation satisfies:
\begin{itemize}
\item $\rho(Z_\alpha^N)=\zeta(\alpha)\Id_V$, for all $\alpha\in\W.$
\item $\rho(\eta_j)= \zeta(\eta_j)\Id_V$ for all $j\in\{1,...,s\}$.
\end{itemize}
\end{prop}

\subsection{Local representations of $\Z\lambda$}\label{localrepresentationbalanced} Here, we introduce the notion of local representation of the balanced Chekhov-Fock algebra $\Z\lambda$ associated to an ideal triangulation $\lambda$. We then extend Main Theorem  to give a decomposition of local representations of $\Z\lambda$ into its irreducible components.

In Subsection \ref{representationsChekhovFock}, we introduced the map
$$\mathcal{T}_\omega(\lambda) \longrightarrow \bigotimes_{T_i \in F(\lambda)}\mathcal{T}_\omega(T_i)$$
where $F(\lambda)$ is the set of faces of $\lambda$ and $\mathcal{T}_\omega(T_i)$ is the triangle algebra associated to the face $T_i$. It is clear that this map restricts to a morphism
$$\iota: \Z\lambda \longrightarrow \bigotimes_{T_i\in F(\lambda)} \Z{T_i}.$$
Here, $\Z{T_i}$ is the \textit{balanced triangle algebra} associated to the face $T_i$.

\begin{Def}
A \textit{local representation of the balanced Chekhov-Fock algebra $\Z\lambda$} is a representation $\rho: \Z\lambda \longrightarrow \End(V)$ that can be written as $(\rho_1\otimes...\otimes\rho_m)\circ \iota$ where each $\rho_i$ is an irreducible representation of $\Z{T_i}$.
\end{Def}

In order to classify local representations of $\Z\lambda$, we first have to understand the irreducible representations of the balanced triangle algebra $\Z T$. Let $\tau$ be the train track in $T$ with edges $e_1,e_2,e_3$ as in Figure \ref{traintrack} and denote by $\mathcal{W}(\tau,\mathbb{Z})$ The group  of integer weight systems on $\tau$. We have the following:

\begin{lemma}
Up to isomorphism, an irreducible representation of the balanced triangle algebra $\Z T$ has dimension $N$ and is classified by a twisted homomorphism $\zeta: \mathcal{W}(\tau,\mathbb{Z})\longrightarrow \mathbb{C}^*$ together with a choice of $N$-th root $C=\big(\zeta(\mu)\big)^{1/N}$ where $\mu\in \mathcal{W}(\tau,\mathbb{Z})$ is such that $\mu(e_i)=1$ for all $i\in\mathbb{Z}_3$. Such a representation satisfies
\begin{itemize}
\item $\rho(Z_\alpha^N)=\zeta(\alpha)\Id_V.$
\item $\rho(Z_\mu)=C\Id_V$.
\end{itemize}
\end{lemma}

\begin{proof}
The group $\mathcal{W}(\tau,\mathbb{Z})$ is generated by $\{\alpha_1,\alpha_2,\alpha_3\}$ where
$$\alpha_i(e_j)=\delta_{ij},~i,j\in\mathbb{Z}_3.$$
It follows that the balanced triangle algebra $\Z T$ is generated by $Z_{\alpha_1}^{\pm 1}, Z_{\alpha_2}^{\pm 1}$ and $Z_{\alpha_3}^{\pm 1}$ and the relations are
$$Z_{\alpha_1}Z_{\alpha_{i+1}}= \omega^{-2}Z_{\alpha_{i+1}}Z_{\alpha_i},~ i\in\mathbb{Z}_3.$$
If we denote by $Z_i$ the generator of $\mathcal{T}_\omega(T)$ associated to the edge $\lambda_i$ (so for instance $Z_{\alpha_1} = [Z_2Z_3]$), the map
$$\begin{array}{llll}
\Psi: & \Z T & \longrightarrow  & \mathcal{T}_{\omega}(T)\\
 & Z_{\alpha_i} & \longmapsto & Z_i^{-1}
 \end{array}$$
is an isomorphism of algebra such that $\Psi(\mu)=H^{-1}$ where $H=[Z_1Z_2Z_3]$.

In particular, an irreducible representation $\rho$ of $\Z \lambda$ has the form $\rho= \overline{\rho}\circ \Psi$ where $\overline\rho$ is an irreducible representation of $\mathcal{T}_\omega(T)$.

Using the result of Subsection \ref{representationsChekhovFock} and the fact that a twisted homomorphism of $\mathcal{W}(\tau,\mathbb{Z})$ is fully determined by its value on the $\alpha_i$, we get the result.
\end{proof}
Let $\tau_i$ be the restriction of the train track $\tau_\lambda$ to the triangle $T_i$ of the triangulation $\lambda$. A twisted homomorphism $\zeta: \W \longrightarrow \mathbb{C}^*$ induces a twisted homomorphism $\zeta_i: \mathcal{W}(\tau_{i},\mathbb{Z})\longrightarrow \mathbb{C}^*$ for each triangle $T_i$ of the triangulation $\lambda$. In particular, the following proposition is a straightforward extension of \cite[Proposition 6]{baibonahon}:

\begin{prop}
A local representation $\rho: \Z\lambda \longrightarrow \End(V)$ has dimension $N^{4g-4+2s}$ and is classified (up to isomorphism) by a twisted homomorphism $\zeta: \W \longrightarrow \mathbb{C}^*$ and a choice of $N$-th root $C=\zeta(\mu)^{1/N}$ where $\mu(e)=1$ for all edge $e$ of $\tau_\lambda$. Such a representation satisfies:
\begin{itemize}
\item $\rho(Z_\alpha^N)=\zeta(\alpha)\Id_V$.
\item $\rho(Z_\mu)=C\Id_V$.
\end{itemize}
\end{prop}

\noindent Finally, Main Theorem implies the following:

\begin{theo}\label{theolocalbalanced}
Let $\rho: \Z\lambda \longrightarrow \End(V)$ be the (isomorphism class of) representation classified by the twisted homomorphism $\zeta: \W \longrightarrow \mathbb{C}^*$ and the choice of $N$-th root $C= \zeta(\mu)^{1/N}$ (where $\mu$ is defined as above). Then $\rho=\bigoplus_{i\in \mathcal{I}} \rho^{(i)}$ where each $\rho^{(i)}$ is irreducible, classified by the same twisted homomorphism $\zeta$ and $N$-th root $h_k^{(i)}=\big(\zeta(\eta_k)\big)^{1/N}$ with $h_1^{(i)}...h_s^{(i)}=C$ (here, the $\eta_k$ are defined as in Subsection \ref{balancedchekhovfock}).

Moreover, for each choice of $N$-th root $h_k=\big(\zeta(\eta_k) \big)^{1/N}$ for each $k\in\{1,...,s\}$, there are exactly $N^g$ indices $i\in I$ such that $h^{(i)}_k=h_k$ for all $k$.
\end{theo}

\begin{proof}
Let $\rho$ be a local representation of $\Z\lambda$ classified by $\zeta$ and $C$. In particular, $\rho$ induces a local representation $\overline\rho:=\rho \circ i$ where $i: \T\lambda \hookrightarrow \Z\lambda$. Note that the local representation $\overline\rho$ is classified by the weight $\zeta(\beta_i)$ for all edge $\lambda_i$, where $Z_{\beta_i}=Z_i^2= i(X_i)$.

Let $P_j\in \T\lambda$ be the puncture invariant associated to the puncture $v_j$. Note that, the image of $P_j$ in $\Z\lambda$ is $Z_{\eta_j}^2$. We claim that the eigenspaces of $\overline{\rho}(P_j)$ correspond to the eigenspaces of $\rho(Z_{\eta_j})$. In fact, if $V_{h_j}(Z_{\eta_j})$ is the eigenspace of $\rho(Z_{\eta_j})$ corresponding to the eigenvalue $h_j=\big(\zeta(\eta_j) \big)^{1/N}$, then one has the inclusion
$$V_{h_j}(Z_{\eta_j})\subset V_{p_j}(P_j),$$
where $V_{p_j}(P_j)$ is the eigenspace of $\overline{\rho}(P_j)=\rho(Z_{\eta_j}^2)$ corresponding to the eigenvalue $p_j=h_j^2$. Because there are only $N$ different possible eigenvalues of $\rho(Z_{\eta_j})$, a dimension argument shows the equality.

Now, we can apply Main Theorem and we get that, for each choice of $(h_1,...,h_s)$ where $h_j = \big(\zeta(Z_{\eta_j})\big)^{1/N}$, the intersection $V_{h_1}(Z_{\eta_1})\cap ... \cap V_{h_s}(Z_{\eta_s})$ has dimension$N^{4g-3+s}$ and so is made of $N^g$ copies of the irreducible representation of $\Z\lambda$ classified by $\zeta$ and $h_1,...,h_s$.
\end{proof}

In \cite[Section 3.]{bonahonwongII}, the author associate a character $r_\zeta \in \chi\big(\Sigma, \text{SL}(2,\mathbb{C})\big)$ to a twisted homomorphism $\zeta: \W \longrightarrow \mathbb{C}^*$. In particular, the (irreducible or local) representations of the balanced Chekhov-Fock algebra associated to an ideal triangulation $\lambda$ of $\Sigma$ are classified, up to finitely many choice, by a Zariski open set in $\chi\big(\Sigma, \text{SL}(2,\mathbb{C})\big)$.

Note that, if $r_\zeta$ is the character associated to the twisted homomorphism $\zeta$, the holonomy of $r_\zeta$ around a puncture $v_j$ is parabolic exactly when $\zeta(\eta_j)=1$.

\subsection{Representations of $\Sk{\overline{\Sigma}}$}\label{skeinalgebra} We explain here how Theorem \ref{theolocalbalanced} gives rise to a new family of representations of the Kauffman bracket skein algebra of the closed surface $\overline\Sigma= \Sigma \cup\{v_1,...,v_s\}$.

\medskip

\noindent\textbf{Skein algebra.} Given a 3-manifold $M$, and a non-zero complex number $A\in\mathbb{C}^*$, consider the complex vector space $V(M)$ freely generated by isotopy classes of framed links in $M$. The \textit{skein module} $\Sk M$ of $M$ is the quotient of $V(M)$ by the \textit{Kauffman bracket skein relations} as defined in Figure \ref{skeinrelations2}.

\begin{figure}[!h] 
\begin{center}
\includegraphics[height=5cm]{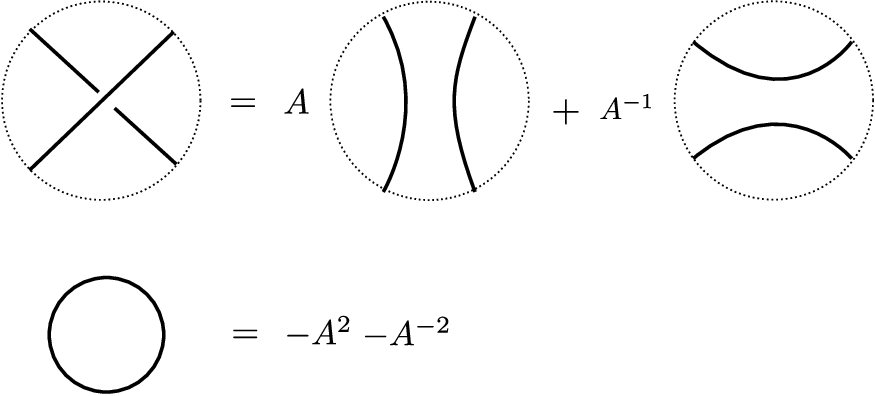}
\end{center}
\caption{Kauffman bracket skein relations} 
\label{skeinrelations2}
\end{figure}

Namely, we identify three different links when differ by the previous relation in an open ball and agree everywhere else.

Given a framed link $K\subset M$, we denote by $[K]$ its image in the skein module $\Sk{M}$.

When $M=S\times [0,1]$ for a surface $S$, the skein module $\Sk{M}=\Sk S$ inherits an algebra structure given by superposition of links. Namely, given two framed links $K_1$ and $K_2$ in $S\times [0,1]$, the product $[K_1].[K_2]$ is defined to be the image of $K_1\cup K_2$ in $\Sk S$, where $K_1\cup K_2$ is given by the superposition of $K_1$ on top of $K_2$ where we rescaled so that $K_1\subset S\times [0,\frac{1}{2}]$ and $K_2\subset S\times [\frac{1}{2},1]$. We call $\Sk S$ the \textit{Kauffman bracket skein algebra of $S$}.

Finite dimensional representations of the skein algebra $\Sk{S}$ are of main importance as they appear naturally in Topological Quantum Field Theory (TQFT) as for example, in the Witten-Reshetekin-Turaev TQFT (see for example \cite{bhmv,turaevbook}).

\medskip

\noindent\textbf{Classical shadow and quantum trace.} Let $\mu: \Sk \Sigma \longrightarrow \End (V)$ be an irreducible representation of the Kauffman bracket skein algebra of $\Sigma$.

In \cite{bonahonwong1} (see also \cite{thangle} for a simpler proof), the authors proved that, if $A$ is a primitive $N$-th root of $-1$, the $N$-th Chebyshev polynomial $T_N$ of the first kind of any skein $[K]\in \Sk \Sigma$ is a central element in $\Sk\Sigma$. In particular, the pre-composition of $\rho$ by $T_N$ maps each skein $[K]$ to a multiple of the identity in $\End(V)$. This multiple of the identity can be interpreted as an element $r_\mu \in \chi \big(\Sigma,\text{SL}_2(\mathbb{C})\big)$ in the $\text{SL}(2,\mathbb{C})$ character variety of $\Sigma$. This character is called \textit{the classical shadow} of the representation $\mu$.

When $A=\omega^{-2}$ (so $A$ is a primitive $N$-th root of $-1$) and $\lambda$ is an ideal triangulation of $\Sigma$, the authors of \cite{quantumtrace} (see also \cite{thangle2} for a more conceptual proof) constructed a \textit{quantum trace map}
$$\text{tr}_\omega^\lambda: \Sk\Sigma \longrightarrow \Z\lambda,$$
which turns out to be an injective algebra homomorphism.

In particular, by pre-composing irreducible representations of $\Z\lambda$ by the quantum trace, Bonahon and Wong obtained in \cite{bonahonwongII} a family of irreducible representations of the Kauffman bracket skein algebra of $S$ indexed by the character variety $\chi \big(\Sigma,\text{SL}_2(\mathbb{C})\big)$. Moreover, taking the classical shadow of such an irreducible representation recovers the character.

\medskip

\noindent\textbf{Representations of $\Sk{\overline{\Sigma}}$.} The inclusion $\Sigma \hookrightarrow \overline\Sigma$ gives an algebra homomorphism
$$\iota: \Sk{\overline\Sigma} \longrightarrow \Sk\Sigma.$$
Let $r\in \chi\big(\Sigma,\text{SL}(2,\mathbb{C})\big)$ be a character obtain from a character $r' \in \chi\big(\overline{\Sigma},\text{SL}(2,\mathbb{C})\big)$ (namely, the holonomy of $r$ around each puncture is trivial). If $\zeta: \W \longrightarrow \mathbb{C}^*$ is the twisted homomorphism associated to $r$, $\zeta(\eta_j)=1$ for each puncture $v_j$.

Denote by 
$$\rho :\Z\lambda \longrightarrow \End(V)$$
the local representation of $\Z\lambda$ classified by $\zeta$ and the $N$-th root $C= (-\omega^4)^s$. Let $E\subset V$ the intersection of the eigenspaces of $\rho(Z_{\eta_k})$ for $k\in\{1,...,s\}$ corresponding the the eigenvalue $-\omega^4$.

By Theorem \ref{theolocalbalanced}, the vector space $E$ is stable by $\rho\big(\Z\lambda\big)$, so we get an induced representation  $\rho': \Z\lambda \longrightarrow \End(E)$. Note that, $\rho'$ is made of $N^g$ copies of the irreducible representation of $\Z\lambda$ classified by $\zeta$ and puncture invariant $-\omega^4$.

\begin{prop}
There is a proper linear subspace $F\subset E$ such that the composition
$$\mu: \Sk{\overline\Sigma} \overset{\iota}{\longrightarrow} \Sk\Sigma \overset{\overline{\rho}}{\longrightarrow} \End(E)$$
induces a representation of $\Sk{\overline\Sigma}$. The classical shadow of each irreducible factor of $\mu$ is same. Finally, the dimension of $F$ is at least $N^{4g-3}$ when $g>1$ and at least $N^2$ when $g=1$.
\end{prop}

\begin{proof}
This is a direct consequence of the construction of \cite{bonahonwongIII}. In fact, using the decomposition of $\overline\rho$ into irreducible parts and considering the \textit{total off-diagonal kernel} of each irreducible factor (see \cite[Section 4.2]{bonahonwongIII}), one gets the result.
\end{proof}

Note that, the vector space $F$ is canonically associated to the triangulation $\lambda$, which make the family of representations described above easier to handle for computations.

\bibliography{local representations.bbl}
\bibliographystyle{alpha}

\end{document}